\newtheorem{thm}{Theorem}[section]
\theoremstyle{definition}
\theoremstyle{remark}
\numberwithin{equation}{section}
\begin{document}

\title[Burnside's formula]{An alternating proof of sharp inequalities related with Burnside's formula}%
\author{necdet batir}%
\address{department of  mathematics, nev{\c{s}}ehir hbv university, nev{\c{s}}ehir, 50300 turkey}%
\email{nbatir@hotmail.com}%

\subjclass{33B15, 26D07}%
\keywords{Burnside's formula, Stirling's formula, factorial function, inequalities}%

\begin{abstract} We provide an alternating proof of the following sharp inequalities  related with Burnside's formula for $n!$ proved by the author in [A double inequality related with Burnside’s, formula,  Proyecciones J.  Math., Vol. 37, No 1, pp. 55-59, 2018.]
\begin{equation*}
\sqrt{2\pi}\left(\frac{n+a_*}{e}\right)^{n+a_*}<n!<\sqrt{2\pi}\left(\frac{n+a^*}{e}\right)^{n+a^*}\,(n\in\mathbb{N}),
\end{equation*}
where the constants $a_*=0.428844044...$ and $a^*=0.5$ are the best possible.
\end{abstract}
\maketitle
\section{introduction}
It is of interest to study how the factorial function $n!$ behaves when $n$ is sufficiently large. The most well known approximation formula to approximate factorial function is the Stirling's formula given by
\begin{equation}\label{e:1}
n!\sim n^ne^{-n}\sqrt{2\pi n}.
\end{equation}
The origin of this formula is based on a study of the French mathematician Abraham de Moivre. In 1733 de Moivre developped the formula
\begin{equation*}
n!\sim C\cdot\sqrt{n}n^ne^{-n},
\end{equation*}
where $C$ is a constant. He was unable, however, to evaluate exact numerical value of this constant; this task befell a Scot mathematician James Stirling (1692-1770), who found  $C=\sqrt{2\pi}$. Formula (\ref{e:1})
is  known as Stirling's formula today. This formula has many applications in probability theory, statistical physics. number theory, and special functions. The most important  known simple  approximation formula for factorial function after Stirling's formula is Burnside's formula \cite{3}, which is given by
\begin{equation*}
n!\sim\sqrt{2\pi}\left(\frac{n+\frac{1}{2}}{e}\right)^{n+\frac{1}{2}}.
\end{equation*}
It is well known that this formula is more accurate than the Stirling's formula.
Please refer to \cite{4,5,6,7} and the references therein for related inequalities.
In very new paper the author \cite{2} proved that
\begin{equation*}
\sqrt{2\pi}\left(\frac{n+a_*}{e}\right)^{n+a_*}<n!<\sqrt{2\pi}\left(\frac{n+a^*}{e}\right)^{n+a^*}\,(n\in\mathbb{N}),
\end{equation*}
where the constants $a_*=0.428844044...$ and $a^*=0.5$ are the best possible. Our aim in this note is to supply a completely different proof of these inequalities.

\section{main result}
Our main result is a new proof of the following theorem.
\begin{thm} For all $n\in\mathbb{N}$ we have
\begin{equation}\label{e:2}
\sqrt{2\pi}\left(\frac{n+a_*}{e}\right)^{n+a_*}<n!<\sqrt{2\pi}\left(\frac{n+a^*}{e}\right)^{n+a^*},
\end{equation}
where the constants $a_*=0.428844044...$ and $a^*=0.5$ are the best possible.
\end{thm}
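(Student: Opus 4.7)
The plan is to encode both bounds in (\ref{e:2}) as sign conditions on one auxiliary sequence and then to reduce everything to the strict concavity of $\ln$. For a parameter $a\ge 0$, set
\[
\Phi_a(n)\;:=\;\ln n!-\ln\sqrt{2\pi}-(n+a)\ln(n+a)+(n+a),
\]
so that the upper inequality in (\ref{e:2}) is equivalent to $\Phi_{1/2}(n)<0$ and the lower one to $\Phi_{a_*}(n)>0$. Using the antiderivative $\int\ln t\,dt=t\ln t-t$, a short telescoping calculation gives the key identity
\[
\Phi_a(n+1)-\Phi_a(n)\;=\;\ln(n+1)-\int_{n+a}^{n+1+a}\ln t\,dt,
\]
to which the Hermite--Hadamard midpoint inequality for the strictly concave function $\ln$ applies in the form $\int_{n+a}^{n+1+a}\ln t\,dt<\ln\bigl(n+a+\tfrac12\bigr)$.

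Two cases then fall out. For $a=a^{*}=\tfrac12$ the midpoint equals $n+1$, so $\Phi_{1/2}$ is strictly increasing; combined with the crude Stirling asymptotic $\ln n!=n\ln n-n+\tfrac12\ln(2\pi n)+o(1)$ (which is genuinely weaker than Burnside and hence non-circular), a term-by-term expansion of $(n+\tfrac12)\ln(n+\tfrac12)$ produces $\Phi_{1/2}(n)\to 0$, and monotonicity from below then forces $\Phi_{1/2}(n)<0$ for every $n$. For $a=a_{*}$ the midpoint $n+a_{*}+\tfrac12$ is strictly less than $n+1$, so again $\Phi_{a_{*}}$ is strictly increasing; since $a_{*}$ is characterized by $(1+a_{*})\ln(1+a_{*})-(1+a_{*})=-\ln\sqrt{2\pi}$, equivalently by $\Phi_{a_{*}}(1)=0$, monotonicity yields $\Phi_{a_{*}}(n)>0$ for every $n\ge 2$ (with equality at $n=1$).

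Sharpness will follow from the same picture. The map $a\mapsto(1+a)\ln(1+a)-(1+a)$ is strictly increasing on $(0,\infty)$, so any $a>a_{*}$ reverses the lower bound at $n=1$ by continuity; and since the same expansion, kept one term further, gives $\Phi_a(n)=(\tfrac12-a)\ln n+O(1/n)$, any $a<\tfrac12$ drives $\Phi_a(n)\to+\infty$, violating the upper bound for all sufficiently large $n$. The step I expect to be most delicate is precisely this asymptotic control on $\Phi_a(n)$: it must be extracted from a form of Stirling strictly weaker than Burnside itself so that the argument does not secretly assume what it is trying to prove; every other step is a one-line application of concavity or a direct evaluation.
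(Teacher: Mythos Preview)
Your argument is correct and genuinely different from the paper's. The paper fixes $n$, defines $f(x)=\sqrt{2\pi}\bigl((x+n)/e\bigr)^{x+n}$, cites the known bounds $f(0)<n!<f(1/2)$ from \cite{1}, and then invokes the intermediate value theorem to produce a sequence $(a_n)\subset(0,1/2)$ with $f(a_n)=n!$; it shows $(a_n)$ is increasing, computes $a_1=a_*$ numerically, and uses Stirling to identify the limit $1/2$, whence $a_*\le a_n<1/2$ and the inequalities follow by applying $f$. Your route bypasses the implicit sequence entirely: you study $\Phi_a(n)$ directly, obtain its increment as $\ln(n+1)-\int_{n+a}^{n+1+a}\ln t\,dt$, and use the Hermite--Hadamard midpoint bound for the strictly concave logarithm to get monotonicity in $n$ for every $a\le 1/2$. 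This is more self-contained---you do not need to import the Burnside upper bound $n!<f(1/2)$ from an external reference, since your monotonicity-plus-limit argument proves it---and it makes the mechanism (concavity of $\ln$) completely explicit. Both proofs ultimately appeal to the crude Stirling asymptotic to pin down the limiting behaviour, and your remark about non-circularity is well taken: only $\ln n!=n\ln n-n+\tfrac12\ln(2\pi n)+o(1)$ is needed, which is strictly weaker than what is being proved. You also correctly flag that the lower inequality degenerates to equality at $n=1$; the paper's statement of strict inequality there is a minor slip that your formulation handles more carefully.
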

\begin{proof}
Let $n$ be a fixed positive integer and $x$ be a non-negative real number. We define
\begin{equation*}
  f(x):=\sqrt{2\pi}\left(\frac{x+n}{e}\right)^{x+n}.
\end{equation*}
Differentiation gives $f'(x)=f(x)\log (x+n)>0$. So, $f$ is strictly increasing for all $x>0$. On the other hand, we have
\begin{equation*}
f(0)= n^ne^{-n}\sqrt{2\pi}<n!
\end{equation*}
and
\begin{equation*}
f(1/2)=\sqrt{2\pi}\left(\frac{n+\frac{1}{2}}{e}\right)^{n+\frac{1}{2}}>n!,
\end{equation*}
see \cite{1}, that is,
\begin{equation*}
  f(0)<n!<f(1/2).
\end{equation*}
Since $f$ is continuous on $(0,1/2)$, the intermediate value theorem yields that there exist a sequence $(a_n)$ with $a_n\subset(0,1/2)$ such that
\begin{equation}\label{e:3}
  f(a_n)=n!.
\end{equation}
Since $f$ is continuous and monotonic increasing on $(0,1/2)$, it has an inverse $f^{-1}$ and $f^{-1}$ is also monotonic increasing on the interval
$$
( \sqrt{2\pi}e^{-1},\infty).
$$
We therefore get from (\ref{e:3}) that
$$
a_{n+1}=f^{-1}((n+1)!)>f^{-1}(n!)=a_n,
$$
which implies that  $(a_n)$ is strictly increasing. Since $(a_n)$ is bounded and strictly increasing, it has a limit and we have for all $n\in\mathbb{N}$
\begin{equation}\label{e:4}
a_1<a_n<\lim\limits_{n\to\infty}a_n.
\end{equation}
If we take $n=1$ in  (\ref{e:3}) we obtain
\begin{equation*}
  1=\sqrt{2\pi}\left(\frac{a_1+1}{e}\right)^{a_1+1}
\end{equation*}
or taking the logarithm of both side
\begin{equation*}
  a_1+1-(a_1+1)\log(a_1+1)-\log\sqrt{2\pi}=0.
\end{equation*}
By the help of computer program \textit{Mathematica}, we can solve this equation and we find that $a_1=0.428844044...$. Now we want to show that $\lim\limits_{n\to\infty}a_n=1/2$. It is difficult to evaluate it in the usual way. Instead, we shall follow a different way. From
\begin{equation*}
  n!=\sqrt{2\pi}\left(\frac{n+a_n}{e}\right)^{n+a_n},
\end{equation*}
we get
\begin{equation}
\lim\limits_{n\to\infty}\frac{\sqrt{2\pi}\left(\frac{n+a_n}{e}\right)^{n+a_n}}{n!}=1.
\end{equation}
Using Stirling's formula this becomes
\begin{equation*}
\lim\limits_{n\to\infty}\frac{(n+a_n)^{a_n}e^{-a_n}\left(1+\frac{a_n}{n}\right)^n}{\sqrt{n}}=1.
\end{equation*}
Since $a_n$ is convergent it is easy to see that $\lim\limits_{n\to\infty}e^{-a_n}\left(1+\frac{a_n}{n}\right)^n=1$, so that
\begin{equation*}
\lim\limits_{n\to\infty}\frac{(n+a_n)^{a_n}}{\sqrt{n}}=\lim\limits_{n\to\infty}\left(1+\frac{a_n}{n}\right)^{a_n}\lim\limits_{n\to\infty}n^{a_n-\frac{1}{2}}=1.
\end{equation*}
The first limit on the right hand side goes to 1 as $n$ approaches to infinity, so we have $\lim\limits_{n\to\infty}n^{a_n-\frac{1}{2}}=1$, which is possible only if $\lim\limits_{n\to\infty}a_n=\frac{1}{2}$.
Let us set $a_*=a_1=0.428844044...$ and $a^*=0.5$. Then by (\ref{e:4}) we have $a_*<a_n<a^*$ for all $n\in\mathbb{N}$. Applying $f$ to each side leads to
$$
f(a_*)<f(a_n)=n!<f(a^*),
$$
which is equivalent to (\ref{e:2})with the possible  constants $a_*=0.428844044...$ and $a^*=0.5$. This completes the proof.
\end{proof}

\bibliographystyle{amsplain}

\end{document}